        \newtheorem{theorem}{Theorem}[section]
        \newtheorem{proposition}{Proposition}[section]
        \newtheorem{lemma}{Lemma}[section]
\begin{document}

\title{The Minimal Non-Koszul A($\Gamma$)}
\author{David Nacin \\ William Paterson University \\ Wayne, NJ 07470}
%\date{January 12, 2012}

\maketitle

\begin{abstract}

The algebras $A(\Gamma)$, where $\Gamma$ is a directed layered
graph, were first constructed by I. Gelfand, S. Serconek, V. Retakh
and R. Wilson.  These algebras are generalizations of the algebras
$Q_n$, which are related to factorizations of non-commutative
polynomials. It was conjectured that these algebras were Koszul. In
2008, T.Cassidy and B.Shelton found a counterexample to this claim,
a non-Koszul $A(\Gamma)$ corresponding to a graph $\Gamma$ with 18
edges and 11 vertices.

We produce an example of a directed layered graph $\Gamma$ with 13
edges and 9 vertices which produces a non-Koszul $A(\Gamma)$.  We
also show this is the minimal example with this property.

\end{abstract}

\section{Introduction}

The relationship between the factorizations and coefficients of a
non-commutative polynomial is described in terms of pseudo-roots by
the non-commutative version of Vieta's theorem \cite{quasi}.  The
algebra $Q_n$ of the pseudo-roots for a polynomial of degree $n$ has
been described and studied by Gelfand, Retakh and Wilson in
\cite{1stqn}. These algebras are quadratic and Koszul, and their
corresponding dual algebras $Q_n^!$ have finite dimension.

The generators of this algebra correspond to elements of $B_n$, the
boolean lattice of all subsets of an $n-$element set.  This
construction was then generalized to form the class of algebras
$A(\Gamma)$, each of which is determined by a layered graph
$\Gamma$. The algebras $Q_n$ are simply the algebras $A(\Gamma)$,
where $\Gamma$ equals $B_n$.

Depending on a condition called uniformity of the graph $\Gamma$,
the algebra $A(\Gamma)$ may be quadratic \cite{firstuniform}, thus
leading to the question of which of these algebras are Koszul.  It
was discovered that the algebras were Koszul for the boolean lattice
$B_n$, simplicial complexes and complete layered graphs with
arbitrary numbers of vertices at each level \cite{manyshownkoszul,
justWS}. It was also conjectured that all algebras $A(\Gamma)$ were
Koszul. This was shown not to be the case when Cassidy and Shelton
found the first example of a non-Koszul $A(\Gamma)$ \cite{cs}; see
Figure \ref{CS}.

\begin{figure}[htb!]
\centering%
\includegraphics[height=30mm]{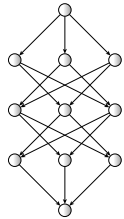}
\caption{A Non-Koszul $A(\Gamma)$ with Eleven Vertices} \label{CS}
\end{figure}

This led to the question of what the smallest $\Gamma$ with a
quadratic non-Koszul $A(\Gamma)$ might be. In 2010, Retakh,
Serconek, and Wilson found that when one edge is removed from a
particular layer, we retain both uniformity and non-Koszulity of
$A(\Gamma)$ \cite{cohom} (see Figure \ref{CSminus}.)

\begin{figure}[h!]
\centering%
\includegraphics[height=30mm]{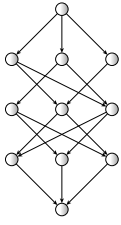}
\caption{Another Non-Koszul $A(\Gamma)$ with Eleven Vertices}
\label{CSminus}
\end{figure}

By computer, we found that we could extend the example further,
removing one vertex from the second highest layer though this was
not minimal; see Figure \ref{CSminusvert}.
\begin{figure}[h!]
\centering%
\includegraphics[height=30mm]{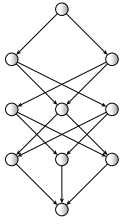}
\caption{A Non-Koszul $A(\Gamma)$ with Ten Vertices}
\label{CSminusvert}
\end{figure}
In this paper we now produce the smallest non-Koszul example.  We
named this graph $H$ after HiLGA,\footnote{The name HiLGA comes from
``Hilbert Series of Layered Graph Algebras''.} the program we wrote
to study algebras of the form $A(\Gamma).$  Computers are not needed
for showing either the minimality in terms of number of vertices, or
the non-Koszulity of $H.$

\section{Theory}

Let $\Gamma=(V,E)$ be a directed graph, where $V = \coprod_{i=0}^{N}
V_i$ and $V_i \neq \emptyset$ for $i \in \{0, 1, \cdots, N\}.$  We
call these $V_i$ the \textit{layers} of $\Gamma$. For each vertex
$v$, we write $|v|=i$, and say that $v$ has height $i$ if $v \in
V_i$. For each edge, let $t(e)$ denote the tail of $e$ and $h(e)$
denote the head of $e$. We say that $\Gamma$ is a \textit{layered
graph of height $N$} if $|t(e)| = |h(e)| + 1$ for all $e \in E.$

Given a layered graph, we construct an algebra $A(\Gamma)$ as
follows:  Begin with the free algebra $T(E)$ generated by the edges
of $\Gamma$.  Let $\pi$ and $\pi'$ be any two paths $\pi = (e_1,
e_2, \cdots e_k)$ and $\pi' = (f_1, f_2, \cdots f_k)$ with the same
starting and finishing vertices. For every such pair $\pi, \pi'$, we
impose the relation
$$(t-e_1)(t-e_2) \cdots (t-e_k) = (t-f_1)(t-f_2) \cdots (t-f_k)$$
where $t$ is a formal parameter commuting with all edges in $E.$
Matching the coefficients of $t$ gives us a collection of relations.
Let $I$ be the collection of all of these relations for any two
paths meeting our conditions.  Then $A(\Gamma)$ is $T(E)/I.$

The standard definition of Koszulity requires a quadratic algebra.
Not all algebras of the form $A(\Gamma)$ are quadratic, but there is
a condition on $\Gamma$ which guarantees $A(\Gamma)$ is quadratic.
Two vertices $v,v'$ of the same layer $l$ are \textit{connected by
an up-down sequence} if there is a sequence $v=v_0, v_1, \cdots,
v_k=v'$ of vertices of level $l$ with the following property: For
each $i$ in $\{1, \cdots, k\}$ there are edges $e$ and $f$ so
$t(e)=v_{i-1}$, $t(f)=v_i$ and $h(e)=h(f).$  A layered graph
$\Gamma$ is said to be \textit{uniform} if for any pair of edges
$e,e'$ with a common tail, their heads are connected by an up down
sequence $v_0, v_1, \cdots v_k$ with $v_i$ adjacent to $t(e)$ for $i
\in \{1, \cdots, k\}.$

It has been shown in \cite{firstuniform} that if $\Gamma$ is a
uniform layered graph then $A(\Gamma)$ is a quadratic algebra. If
$\Gamma$ has a unique minimal vertex of layer zero then there is a
construction giving us a new algebra we call $B(\Gamma).$ The
algebras $B(\Gamma)$ can be presented by generators $u \in V^+ =
\coprod_{i=1}^{N} V_i$ and relations

\begin{enumerate}
\item $u \cdot w = 0$ if there is no edge from $u$ to $w$

\item $u \cdot \sum_{w \in S(u)} w = 0$  where $S(u)$ is the collection of vertices $w$ for which there is an
edge from $u$ to $w$.
\end{enumerate}
The algebras $B(\Gamma)$ are the dual algebras of the associated
graded algebras of $A(\Gamma)$ under a certain filtration.  They
have the same Hilbert series as the actual dual of $A(\Gamma)$ and
when it exists, $B(\Gamma)$ is Koszul if and only if $A(\Gamma)$ is
\cite{cohom}.

\section{A Layered Graph $\Gamma$ on Nine Vertices with non-Koszul A($\Gamma$)}

We introduce a layered graph $H$ on nine vertices; see Figure
\ref{Hv2}.

\begin{figure}[htb!]
\centering%
\includegraphics[width=20mm]{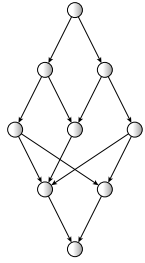}
\caption{The Poset $H$} \label{Hv2}
\end{figure}

\begin{theorem} \label{H}

The algebra $A(\Gamma)$ is not Koszul for $\Gamma = H$.

\end{theorem}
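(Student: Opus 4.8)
The plan is to prove non-Koszulity of $A(\Gamma)$ for $\Gamma = H$ by working with the dual-type algebra $B(\Gamma)$, exploiting the equivalence stated in the excerpt: since $H$ has a unique minimal vertex at layer zero, $B(\Gamma)$ exists, shares its Hilbert series with the genuine dual $A(\Gamma)^!$, and satisfies ``$B(\Gamma)$ is Koszul iff $A(\Gamma)$ is Koszul.'' Thus it suffices to show $B(\Gamma)$ is \emph{not} Koszul. The combinatorial presentation of $B(\Gamma)$ — generators the vertices $u \in V^+$, with the monomial relations $u\cdot w = 0$ when there is no edge $u\to w$, and the linear-combination relations $u\cdot\sum_{w\in S(u)} w = 0$ — is completely explicit once the edge set of $H$ is read off from Figure \ref{Hv2}, so all computations reduce to linear algebra over the chosen base field.

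First I would fix the standard numerical criterion for Koszulity via the Hilbert series. For a Koszul algebra $B$ with quadratic dual $B^!$ one has the identity $H_B(t)\,H_{B^!}(-t) = 1$; more usefully here, Koszulity forces the Poincar\'e--Betti numbers $\dim\mathrm{Ext}^{i,j}$ to vanish off the diagonal $i = j$. So the strategy is to compute enough of the bigraded $\mathrm{Ext}$ groups (equivalently, the graded pieces of the minimal free resolution of the trivial module over $B(\Gamma)$) to exhibit a single nonzero off-diagonal entry. Concretely I would compute the first several terms of the Hilbert series $H_{B(\Gamma)}(t)$ by directly counting a basis of $B(\Gamma)$ in each degree — the relations are homogeneous and explicit, so Gaussian elimination on the span of length-$n$ paths gives $\dim B(\Gamma)_n$ — and compare it against the Hilbert series that a Koszul algebra with these low-degree relations would be forced to have. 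A mismatch at the first degree where the two diverge certifies non-Koszulity.

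The cleanest concrete route, and the one I would carry out, is to locate the obstruction in a low cohomological degree. Because $A(\Gamma)$ is quadratic (by uniformity of $H$, invoking \cite{firstuniform}), Koszulity is equivalent to the vanishing of $\mathrm{Ext}^{i,j}_{B}(k,k)$ for $i\neq j$; the first place this can fail is at internal degree $j$ exceeding homological degree $i$, detectable already through the third syzygy. I would therefore compute $\dim B(\Gamma)_n$ for $n = 0,1,2,3$ directly from the presentation, form the numerical series $\sum_n (-1)^n \dim B(\Gamma)_n\, t^n$-style consistency checks against $1/H_{A(\Gamma)}(t)$, and pinpoint the degree where the Koszul (diagonal) prediction breaks. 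Equivalently, one can compute the Hilbert series of $A(\Gamma)$ itself — its dimensions in low degrees being governed by path counts in $H$ modulo the imposed path-equality relations — and test the Koszul duality relation $H_{A(\Gamma)}(t)\,H_{B(\Gamma)}(-t) = 1$ as a power series; the first coefficient at which this fails gives the proof.

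The main obstacle I anticipate is bookkeeping rather than conceptual: one must extract the exact edge set $S(u)$ for every vertex of $H$ from the figure, set up the degree-by-degree relation spaces correctly, and carry the linear algebra (rank computations over the span of paths) far enough in homological degree that the off-diagonal $\mathrm{Ext}$ class is guaranteed to appear and not merely to be an artifact of a truncation. Since the paper emphasizes that ``computers are not needed,'' I expect the decisive computation to be small enough to present by hand — likely a single explicit cycle in the bar-type or Koszul-dual resolution in degree three or four whose homology class is nonzero off the diagonal — and the real work is verifying that this class is genuinely nonzero, i.e. is not a boundary, by exhibiting the relevant basis of $B(\Gamma)$ in the neighboring degrees.
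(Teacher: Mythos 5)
Your overall strategy --- certify non-Koszulity numerically, via the Hilbert-series consequence of Koszulity --- is the same \emph{kind} of argument the paper makes, and it can in principle succeed here, since the paper's proof shows precisely that $H$ fails numerical Koszulity. But as written your proposal has a genuine gap: it never exhibits the obstruction. Everything hinges on the sentence ``a mismatch at the first degree where the two diverge certifies non-Koszulity,'' yet you never compute any dimensions, never identify the degree of the mismatch, and never show a mismatch exists at all. Numerical Koszulity is only a \emph{necessary} condition for Koszulity, so it is not automatic that a non-Koszul algebra fails it; if your degree-by-degree comparison kept agreeing, you would have proven nothing, and your plan offers no termination criterion or fallback. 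Moreover, the truncation you actually propose (computing $\dim B(\Gamma)_n$ for $n = 0,1,2,3$) would very likely come up empty: the paper's obstruction lives at $i = 4$ --- the complex $\Gamma_{a,4}$ attached to the maximal vertex --- i.e.\ the numerical failure first appears in degree four, so a window that stops at the third syzygy misses it.

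The second, more practical gap is that raw Gaussian elimination on spans of paths is not what makes this computation feasible by hand. The paper invokes Theorem 4.2.1 of \cite{cohom}, which converts numerical Koszulity into a statement about the cohomology of small subgraph complexes $\Gamma_{a,i}$, and then sidesteps computing cohomology altogether via the Euler--Poincar\'e formula combined with the trivial bound $\dim H^{2}(\Gamma_{a,4}) \geq 0$: the entire proof reduces to counting cochains, $1 - 7 + 13 - 6 = 1 \neq 0$. By contrast, your plan requires ranks of relation spaces inside spans of degree-four words (in the $13$ edge generators for $A(\Gamma)$, or the $8$ vertex generators for $B(\Gamma)$), including intersections such as $RV^{2} \cap VRV \cap V^{2}R$, which is exactly the kind of linear algebra the paper's machinery exists to avoid. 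To repair your proposal you would need to (a) carry the computation to degree four, and (b) actually produce the dimension count showing the Koszul duality relation fails there --- at which point you would have rediscovered, in heavier notation, the same numerical obstruction the paper extracts directly from $\Gamma_{a,4}$.
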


\begin{proof}

We use the numerical Koszulity test from theorem 4.2.1 in
\cite{cohom}. Considering the $i=4$ case we know that $A(\Gamma)$ is
not numerically Koszul if $\dim(H^{-1}(\Gamma_{a,4})) -
\dim(H^{0}(\Gamma_{a,4})) + \dim(H^{1}(\Gamma_{a,4})) \neq 0$ where
the layer of $a$ is greater than or equal to 4.  Thus $a$ must be
the maximal vertex in $H$ so $\Gamma_{a,4}$ is the graph in Figure
\ref{Hgam}.

\begin{figure}[h!]
\centering%
\includegraphics[width=20mm]{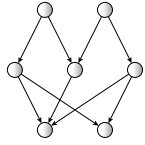}
\caption{The Graph $\Gamma_{a,4}$} \label{Hgam}
\end{figure}

By the Euler-Pointcare formula, we know $\sum (-1)^i \dim(H^i) =
\sum (-1)^j \dim(C^j).$  Thus we have

$$\dim(H^{-1}(\Gamma_{a,4})) - \dim(H^{0}(\Gamma_{a,4})) +
\dim(H^{1}(\Gamma_{a,4})) =$$

$$\dim(C^{-1}(\Gamma_{a,4})) - \dim(C^{0}(\Gamma_{a,4})) +
\dim(C^{1}(\Gamma_{a,4})) - \dim(C^{2}(\Gamma_{a,4})) +
\dim(H^{2}(\Gamma_{a,4})) \geq $$

$$\dim(C^{-1}(\Gamma_{a,4})) - \dim(C^{0}(\Gamma_{a,4})) +
\dim(C^{1}(\Gamma_{a,4})) - \dim(C^{2}(\Gamma_{a,4}))$$

$$= 1 - 7 + 13 - 6 = 1.$$

This shows that $H$ is not numerically Koszul.  Since Koszulity
implies numerical Koszulity, $H$ is not Koszul as well.

\end{proof}

\section{The Minimal non-Koszul A($\Gamma$)}

We now wish to show that $H$ is the unique minimal example.  We must
restrict ourselves to uniform $\Gamma$ in order to guarantee that
the algebra is quadratic \footnote{There is still room, however, to
generalize here using one of the different definitions of Koszulity
for non-quadratic algebras.}.

We will need part of a proposition of Polishchuk and Positselksi
found in \cite{QA} which we summarize here for clarity.

\begin{proposition} \label{PPP}

Let $W$ be a vector space and $X_1, \cdots, X_N \subset W$ be a
collection of its subspaces.  Then the following are equivalent:

\begin{enumerate}

\item The collection $X_1, \cdots, X_N$ is distributive.

\item There exists a direct sum decomposition $W = \bigoplus_{\eta \in
H} W_\eta$ of the vector space $W$ such that each of the subspaces
$X_i$ is the sum of a set of subspaces $W_\eta.$

\end{enumerate}

\end{proposition}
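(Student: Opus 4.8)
The plan is to prove the two implications separately, with essentially all of the content residing in $(1)\Rightarrow(2)$.

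For the easy direction $(2)\Rightarrow(1)$, I would begin from the decomposition $W=\bigoplus_{\eta\in H}W_\eta$ and record that each $X_i=\bigoplus_{\eta\in S_i}W_\eta$ for a subset $S_i\subseteq H$. The key observation is that the assignment $S\mapsto\bigoplus_{\eta\in S}W_\eta$ is a lattice homomorphism from the Boolean lattice $2^H$ into the lattice of subspaces of $W$: the identity $\bigoplus_{\eta\in S}W_\eta+\bigoplus_{\eta\in T}W_\eta=\bigoplus_{\eta\in S\cup T}W_\eta$ is immediate, while $(\bigoplus_{\eta\in S}W_\eta)\cap(\bigoplus_{\eta\in T}W_\eta)=\bigoplus_{\eta\in S\cap T}W_\eta$ follows from the uniqueness of coordinates in a direct sum. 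Since $2^H$ is distributive, so is the sublattice it generates through the $S_i$, and hence so is the homomorphic image of that sublattice, which is precisely the sublattice of subspaces generated by $X_1,\dots,X_N$. This is the required distributivity.

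For $(1)\Rightarrow(2)$ I would manufacture the decomposition from the lattice itself. Let $L$ be the sublattice of the subspace lattice of $W$ generated by $X_1,\dots,X_N$ together with $0$ and $W$; by hypothesis $L$ is a finite distributive lattice. By Birkhoff's representation theorem, $L$ is isomorphic to the lattice of down-sets of its poset $J$ of join-irreducible elements, with $x\in L$ corresponding to the down-set $D(x)=\{j\in J:j\le x\}$. For each join-irreducible $j$ let $j_*$ denote its unique lower cover and choose \emph{any} vector-space complement $W_j$ of $j_*$ inside $j$, so that $j=j_*\oplus W_j$. I would then set $H=J$ and $W_\eta=W_j$.

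The heart of the argument is the claim that the pieces below each $x\in L$ assemble into a direct sum equal to $x$, that is $x=\bigoplus_{j\le x}W_j$. I would prove this by induction on $|D(x)|$: choosing a maximal join-irreducible $j$ in $D(x)$ and letting $x'$ be the join of the remaining join-irreducibles below $x$, the Birkhoff correspondence computes the meet as $x'\wedge j=j_*$, which forces $x'\cap W_j\subseteq j_*\cap W_j=0$ and hence directness, while the down-set property yields $j_*\le x'$ and therefore $x=x'\vee j=x'+W_j$, giving spanning. Taking $x=W$ produces $W=\bigoplus_{j\in J}W_j$, and since each $X_i=\bigoplus_{j\le X_i}W_j$ is visibly a sum of a subset of the pieces, the decomposition has the desired property.

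The main obstacle, and the single place where the full strength of the hypothesis is spent, is the meet computation $x'\wedge j=j_*$. In an arbitrary subspace lattice only the modular law is available and meets need not correspond to intersections of down-sets; the configuration of three distinct lines through the origin of a plane is modular but not distributive and admits no common adapted decomposition. This confirms that it is distributivity, rather than mere modularity, that makes the inductive construction succeed, and so the bulk of the write-up would be devoted to justifying this one meet identity and the clean induction it supports.
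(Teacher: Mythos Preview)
The paper does not actually prove this proposition: it is quoted verbatim as a result of Polishchuk and Positselski, with a citation to \cite{QA} and no argument supplied. There is therefore nothing in the paper itself to compare your write-up against.

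That said, your argument is correct and is essentially the standard one. For $(2)\Rightarrow(1)$ the observation that $S\mapsto\bigoplus_{\eta\in S}W_\eta$ is a lattice homomorphism from $2^H$ into $\mathrm{Sub}(W)$ is exactly right; the sublattice generated by the $X_i$ then sits inside a homomorphic image of a Boolean lattice and is hence distributive. For $(1)\Rightarrow(2)$, invoking Birkhoff's representation theorem and splitting off a complement $W_j$ of $j_*$ inside each join-irreducible $j$ is the classical construction, and your inductive verification that $x=\bigoplus_{j\le x}W_j$ via the meet identity $x'\wedge j=j_*$ is clean and correct. One point you pass over silently is that the lattice $L$ you build is \emph{finite}; this is true precisely because a distributive lattice generated by finitely many elements is finite (the free distributive lattice on $N$ generators being finite), and it is what justifies applying Birkhoff's theorem in the first place. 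You might make that explicit.
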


\begin{lemma} \label{YZ}

Suppose the vector subspaces $W_1, W_2, \cdots, W_M$ and $X_1, X_2,
\cdots, X_N$ generate distributive lattices in Y and Z respectively.
Then the subspaces $W_1 \otimes Z, W_2 \otimes Z, \cdots, W_M
\otimes Z, Y \otimes X_1, Y \otimes X_2, \cdots, Y \otimes X_N$
generate a distributive lattice in $Y \otimes Z.$

\end{lemma}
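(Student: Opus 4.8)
The plan is to use the Polishchuk–Positselski criterion (Proposition \ref{PPP}) to convert the distributivity hypotheses into explicit direct sum decompositions, take their tensor product, and then verify that the resulting decomposition of $Y \otimes Z$ witnesses the distributivity of the tensored subspaces. Concretely, since $W_1, \dots, W_M$ generate a distributive lattice in $Y$, part (2) of Proposition \ref{PPP} gives a decomposition $Y = \bigoplus_{\eta \in H} Y_\eta$ in which each $W_i$ is a sum of some of the $Y_\eta$. Likewise $Z = \bigoplus_{\theta \in K} Z_\theta$ with each $X_j$ a sum of some of the $Z_\theta$. The natural candidate decomposition is then
$$Y \otimes Z = \bigoplus_{(\eta,\theta) \in H \times K} Y_\eta \otimes Z_\theta,$$
which is a genuine direct sum decomposition of $Y \otimes Z$ because tensoring distributes over direct sums.

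The next step is to check that each of the subspaces named in the lemma is a sum of a set of these tensor summands $Y_\eta \otimes Z_\theta$; by the equivalence in Proposition \ref{PPP}, this is exactly what is needed to conclude distributivity. For a subspace of the form $W_i \otimes Z$, I would write $W_i = \bigoplus_{\eta \in S_i} Y_\eta$ for the appropriate index set $S_i \subseteq H$, and then expand
$$W_i \otimes Z = \Bigl(\bigoplus_{\eta \in S_i} Y_\eta\Bigr) \otimes \Bigl(\bigoplus_{\theta \in K} Z_\theta\Bigr) = \bigoplus_{\eta \in S_i,\ \theta \in K} Y_\eta \otimes Z_\theta,$$
so $W_i \otimes Z$ is indeed a sum of the summands indexed by $S_i \times K$. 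The symmetric computation handles $Y \otimes X_j$: writing $X_j = \bigoplus_{\theta \in T_j} Z_\theta$ gives $Y \otimes X_j = \bigoplus_{\eta \in H,\ \theta \in T_j} Y_\eta \otimes Z_\theta$, a sum over $H \times T_j$. Since every one of the $M + N$ distinguished subspaces is a sum of a set of the $Y_\eta \otimes Z_\theta$, the reverse implication $(2) \Rightarrow (1)$ of Proposition \ref{PPP} applied to $W = Y \otimes Z$ yields that these subspaces generate a distributive lattice, as required.

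The main subtlety to get right is that Proposition \ref{PPP} as stated concerns a single fixed finite collection of subspaces, whereas the lemma asks about the lattice \emph{generated} by the $M + N$ subspaces; I would rely on the fact that distributivity of a family of subspaces is precisely the statement that they generate a distributive sublattice of the lattice of all subspaces, so the proposition's two conditions are genuinely the hinge. The one point requiring mild care is the direction of the equivalence being used in each half: condition (1) of the proposition is applied to the two given families to \emph{extract} the decompositions of $Y$ and $Z$, and then condition (2) is verified for the combined family in $Y \otimes Z$ to \emph{deduce} its distributivity. No genuine obstacle arises beyond bookkeeping the index sets, since the key algebraic fact — that the tensor product functor commutes with arbitrary direct sums — is elementary and makes every expansion above exact rather than merely an inclusion.
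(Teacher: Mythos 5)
Your proof is correct and follows essentially the same route as the paper's: both apply Proposition \ref{PPP} to extract direct sum decompositions of $Y$ and $Z$ adapted to the given families, tensor them to obtain the decomposition $Y \otimes Z = \bigoplus_{\eta,\theta} Y_\eta \otimes Z_\theta$, check that each $W_i \otimes Z$ and $Y \otimes X_j$ is a sum of these summands, and invoke the proposition once more to conclude distributivity. Your write-up is somewhat more careful about which direction of the equivalence is used at each step, but the argument is identical in substance.
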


\begin{proof}

By proposition \ref{PPP}, there exist index sets $\alpha, \beta$ and
decompositions $Y = \oplus_{i \in \alpha} T_i$ and $Z = \oplus_{i
\in \beta} U_i$ so that for every $i \in \{1, \cdots, M\}$ and $j
\in \{1, \cdots, N\}$ there are sets $\alpha_i \subseteq \alpha$ and
$\beta_j \subseteq \beta$ so that $W_i = \oplus_{k \in \alpha_i}
T_k$ and $X_j = \oplus_{k \in \beta_i} U_k.$ We can construct the
direct sum decomposition $Y \otimes Z = \oplus_{i \in \alpha, j \in
\beta} (T_i \otimes U_j).$ Then for any $i \in \{1, \cdots, M\}$,
$W_i \otimes Z = \oplus_{j \in \alpha_i, k \in \beta} (T_j \otimes
U_k)$ and $Y \otimes X_i = \oplus_{j \in \alpha, k \in \beta_i} (T_j
\otimes U_k)$.  Referring to proposition \ref{PPP} once more
completes the proof.

\end{proof}

In order to show a quadratic algebra is Koszul, we need to show that
for any positive integer $n$, the collection of subspaces
$V^iRV^{n-i-2}$ generates a distributive lattice in $V^n.$  We
repeat the following lemma from Serconek and Wilson's paper
\cite{justWS} which will allow us to reduce the problem to one where
we check for distributivity inside a much smaller vector space.

\begin{lemma} \label{graded}

Let $V = \sum_{i \in I} V_{[i]}$ be a graded vector space and $\{
X_j | j \in J \}$ be a collection of subspaces of $V$.  Assume that
each $X_j$ is graded, $X_j = \sum_{i \in I} X_{j,[i]},$ and
$X_{j,[i]} = X_j \cap V_{[i]}.$  Then $\{ X_j | j \in J \}$
generates a distributive lattice in $V$ if and only if, for every $i
\in I, \{X_{j,[i]} | j \in J \}$ generates a distributive lattice in
$V_{[i]}.$

\end{lemma}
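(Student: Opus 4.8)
The plan is to reduce the question about the lattice generated in $V$ to the component lattices in each $V_{[i]}$ by showing that the operation of taking graded components is a lattice homomorphism. First I would establish the basic structural fact underlying everything: the collection of graded subspaces of $V$ (those $A$ with $A = \sum_{i} (A \cap V_{[i]})$) is closed under both $+$ and $\cap$, and for any two graded subspaces $A, B$ one has the two identities $(A + B) \cap V_{[i]} = (A \cap V_{[i]}) + (B \cap V_{[i]})$ and $(A \cap B) \cap V_{[i]} = (A \cap V_{[i]}) \cap (B \cap V_{[i]})$. The intersection identity is immediate from associativity and idempotence of $\cap$. The sum identity is the one requiring care: if $z \in (A+B) \cap V_{[i]}$ is homogeneous of degree $i$ and $z = a + b$ with $a \in A$, $b \in B$, then gradedness of $A$ and $B$ forces the degree-$i$ homogeneous components $a_{[i]} \in A$ and $b_{[i]} \in B$, and the remaining components of $a$ and $b$ must cancel, so $z = a_{[i]} + b_{[i]} \in (A \cap V_{[i]}) + (B \cap V_{[i]})$. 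Since each generator $X_j$ is assumed graded with $X_{j,[i]} = X_j \cap V_{[i]}$, it follows that every member of the lattice $L$ generated by $\{X_j \mid j \in J\}$ is again graded.

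Next I would define, for each $i \in I$, the map $\phi_i \colon L \to \mathrm{Sub}(V_{[i]})$ by $\phi_i(A) = A \cap V_{[i]}$. The two identities above say precisely that $\phi_i$ carries $+$ to $+$ and $\cap$ to $\cap$, so $\phi_i$ is a homomorphism of lattices. Because a lattice homomorphism sends the sublattice generated by a set to the sublattice generated by the images, and $\phi_i(X_j) = X_{j,[i]}$, the image $\phi_i(L)$ is exactly the lattice $L_{[i]}$ generated by $\{X_{j,[i]} \mid j \in J\}$ inside $V_{[i]}$.

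For the forward implication I would simply note that if $L$ is distributive then each $L_{[i]} = \phi_i(L)$, being a homomorphic image of a distributive lattice, is distributive. For the reverse implication I would assemble the $\phi_i$ into a single map $\phi \colon L \to \prod_{i \in I} L_{[i]}$ given by $A \mapsto (A \cap V_{[i]})_{i \in I}$. Since every $A \in L$ is graded, it is recovered from its components via $A = \sum_{i}(A \cap V_{[i]})$, so $\phi$ is injective; together with the fact that each coordinate $\phi_i$ is a lattice homomorphism, this makes $\phi$ a lattice embedding. If every $L_{[i]}$ is distributive, then so is the product $\prod_i L_{[i]}$, and a sublattice of a distributive lattice is distributive, whence $L$ is distributive.

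The only genuine obstacle is the bookkeeping establishing the sum identity for graded subspaces, which is what makes $\phi$ (and each $\phi_i$) a \emph{lattice} homomorphism rather than merely a monotone map; once that identity is verified, the two standard facts — that homomorphic images and sublattices of products of distributive lattices are distributive — dispatch both directions.
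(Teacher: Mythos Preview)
Your argument is correct: the key observation that taking the $i$-th graded component is a lattice homomorphism on graded subspaces (once the sum identity $(A+B)\cap V_{[i]} = (A\cap V_{[i]})+(B\cap V_{[i]})$ is verified via the direct-sum decomposition) cleanly handles both directions, using that homomorphic images and sublattices of products of distributive lattices remain distributive.

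There is nothing to compare against in the paper itself, however. The paper does not supply a proof of this lemma; it merely quotes the statement from Serconek and Wilson \cite{justWS} (``We repeat the following lemma from Serconek and Wilson's paper\ldots''). So your write-up in fact supplies what the paper omits.
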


\begin{lemma} \label{countdown}

Let $A$ be a quadratic algebra where the generators are partitioned
into the disjoint spaces $V_1, V_2, \cdots, V_N.$  Suppose every
relation is contained inside the space $V_{i+1}V_i$ for some $i \in
\{1, \cdots, n-1\}$. Consider the $\mathbb{Z}^n$ grading of $V^n$
where $x_1x_2 \cdots x_n$ is in $V^n_{[(z_1, z_2, \cdots, z_n)]}$ if
and only if $x_i \in V_i$ for all $i \in \{1, \cdots, n\}$.

The collection $\{ RV^{n-2}, \cdots, V^{n-2}R\}$ generates a
distributive lattice in $V^n$ if and only if $\{
RV^{n-2}_{[(n,n-1,\cdots, 1)]},$ $VRV^{n-3}_{[(n,n-1,\cdots, 1)]},$
$\cdots,$ $V^{n-2}R_{[(n,n-1,\cdots, 1)]} \}$ generates a
distributive lattice in $V^n_{[(n,n-1,\cdots, 1)]}.$

\end{lemma}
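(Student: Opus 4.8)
The plan is to feed the $\mathbb{Z}^n$ grading into Lemma \ref{graded}, and then show that the single multidegree $(n,n-1,\cdots,1)$ is the only one whose distributivity is not forced automatically, the forcing coming from Lemma \ref{YZ} together with the fact that sublattices of distributive lattices are distributive.

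First I would record that the grading hypothesis makes everything homogeneous. Since each relation lies in some $V_{i+1}V_i$, the relation space splits as $R=\bigoplus_i R_i$ with $R_i=R\cap(V_{i+1}\otimes V_i)$, so each generator $V^aRV^{n-a-2}$ is graded: its component in multidegree $(z_1,\cdots,z_n)$ equals $V_{z_1}\otimes\cdots\otimes V_{z_a}\otimes R_{z_{a+2}}\otimes V_{z_{a+3}}\otimes\cdots\otimes V_{z_n}$ when $z_{a+1}=z_{a+2}+1$ (with $R_{z_{a+2}}$ present, i.e.\ $z_{a+2}\in\{1,\cdots,n-1\}$), and is $0$ otherwise. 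Thus the hypotheses of Lemma \ref{graded} hold, and it suffices to prove that $\{(V^aRV^{n-a-2})_{[(z)]}\}$ is distributive in $V^n_{[(z)]}$ for \emph{every} multidegree $(z)$, the assertion being that every $(z)\neq(n,n-1,\cdots,1)$ is distributive as soon as the top multidegree is.

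Next I would analyze a fixed $(z)=(z_1,\cdots,z_n)$ through its descent pattern, calling a position $p$ \emph{active} when $z_p=z_{p+1}+1$ and a \emph{break} otherwise. Because every relation uses some $R_i$ with $i\in\{1,\cdots,n-1\}$, a multidegree in which every internal position is active must be strictly descending by ones, and the only such sequence with entries in $\{1,\cdots,n\}$ is $(n,n-1,\cdots,1)$; hence any other $(z)$ has at least one break. At a break $p$ I would split $V^n_{[(z)]}=Y\otimes Z$, taking $Y$ to be the first $p$ tensor factors and $Z$ the last $n-p$. No active subspace straddles the break, so each active subspace has the form $W\otimes Z$ or $Y\otimes X$, and Lemma \ref{YZ} reduces distributivity of the full collection in $V^n_{[(z)]}$ to distributivity of $\{W\}$ in $Y$ and of $\{X\}$ in $Z$. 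Iterating over all breaks, distributivity in multidegree $(z)$ follows once the collection attached to each maximal run of consecutive active positions is distributive.

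The crux, which I expect to be the main obstacle, is to identify each maximal run with a subcollection of the top-multidegree collection. A run of length $r$ sits inside a fully descending block $V_h\otimes V_{h-1}\otimes\cdots\otimes V_{h-r}$, and after factoring off the untouched tensor factors — a lattice isomorphism onto its image, again by Lemma \ref{YZ} (equivalently Proposition \ref{PPP}) — its collection is carried to the configuration on that block built from $R_{h-1},\cdots,R_{h-r}$. This is literally the collection of the $r$ consecutive subspaces among $RV^{n-2},\cdots,V^{n-2}R$ that use the relations $R_{h-1},\cdots,R_{h-r}$ inside $V^n_{[(n,\cdots,1)]}$; the constraint $i\in\{1,\cdots,n-1\}$ guarantees all these indices actually occur there, and it is precisely what makes $(n,\cdots,1)$ the unique full run. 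Since the lattice generated by a subset of the generators of a distributive lattice is a sublattice, hence distributive, the assumed distributivity of the top-multidegree collection forces every run collection, and therefore (by the preceding paragraph) every non-top multidegree, to be distributive. The converse implication is immediate from Lemma \ref{graded}, which restricts a distributive lattice of graded subspaces to the top component, so the two together yield the stated equivalence.
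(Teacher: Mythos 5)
Your proposal is correct and follows essentially the same route as the paper's proof: reduce to individual multidegrees via Lemma \ref{graded}, split each multidegree at its breaks and reassemble with Lemma \ref{YZ}, and obtain distributivity on each maximal descending run from the top multidegree $(n, n-1, \cdots, 1)$ by passing to a subcollection and factoring off the untouched tensor factors. You are somewhat more explicit than the paper about the supporting facts (gradedness of the generators $V^aRV^{n-a-2}$, uniqueness of the fully descending multidegree, the sublattice and lattice-embedding arguments), but the structure of the argument is identical.
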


\begin{proof}

By lemma \ref{graded} we know that $\{ RV^{n-2}, \cdots, V^{n-2}R\}$
generates a distributive lattice in $V^n$ if and only if $\{
RV^{n-2}_{[(z_n,z_{n-1},\cdots, z_1)]},$
$VRV^{n-3}_{[(z_n,z_{n-1},\cdots, z_1)]},$ $\cdots,$
$V^{n-2}R_{[(z_n,z_{n-1},\cdots, z_1)]}$ generates a distributive
lattice in $V^n_{[(z_n,z_{n-1},\cdots, z_1)]} $ for every
$(z_n,z_{n-1},\cdots, z_1).$ Thus we only need to show that if $\{
RV^{n-2}_{[(n,n-1,\cdots, 1)]},$ $VRV^{n-3}_{[(n,n-1,\cdots, 1)]},$
$\cdots,$ $V^{n-2}R_{[(n,n-1,\cdots, 1)]} \}$ generates a
distributive lattice in $V^n_{[(n,n-1,\cdots, 1)]}$ then $\{
RV^{n-2}_{[(z_n,z_{n-1}, \cdots, z_1)]},$ $VRV^{n-3}_{[(z_n,z_{n-1},
\cdots, z_1)]},$ $\cdots,$ $V^{n-2}R_{[(z_n,z_{n-1}, \cdots, z_1)]}
\}$ generates a distributive lattice in $V^n_{[(z_n,z_{n-1}, \cdots,
z_1)]}$ for every $(z_n,z_{n-1},\cdots, z_1).$

Consider the collection $\{V_{z_n}V_{z_{n-1}}V_{z_{n-2}} \cdots
V_{z_{b+1}} W_i V_{z_{a-1}} V_{z_{a-2}}  \cdots V_{z_{1}} \}_{i \in
\{a+1, \cdots, b \} } $ where $W_i =$ $V_b V_{b-1}$ $\cdots$
$V_{i+1} R_i V_{i-2} V_{i-3}$ $\cdots$ $V_{a+1} V_a$.  It is enough
to show that this collection is distributive in $V^n_{[(z_n,
z_{n-1}, \cdots, z_2, z_1)]}$ where $z_{i+1} = z_{i} + 1$ for $i \in
\{a, \cdots, b-1\}$ because then we can string these increasing runs
together using lemma \ref{YZ} to complete the proof.

By our assumption, we do know that the collection $\{V_{n} V_{n-1}
V_{n-2} \cdots V_{b+1} W_i V_{a-1} V_{a-2} \cdots V_{1} \}_{i \in
\{a+1, \cdots, b \} }$ is distributive in $V^n_{[(n, n-1, \cdots, 2,
1)]}$ which implies the collection $\{ W_i \}_{i \in \{a+1, \cdots,
b \} } $ is distributive in $V^{b-a+1}_{[(b, b-1, \cdots, a+1,
a)]}$. This implies the distributivity of
$\{V_{z_n}V_{z_{n-1}}V_{z_{n-2}} \cdots V_{z_{b+1}} W_i V_{z_{a-1}}
V_{z_{a-2}} \cdots V_{z_{1}} \}_{i \in \{a+1, \cdots, b\} } $ in
$V^n_{[(z_n, z_{n-1}, \cdots, z_2, z_1)]}$ completing the proof.

\end{proof}

\begin{theorem} \label{pinch}

Suppose $\Gamma$ is a uniform layered graph on $V = \cup_{i=0}^N
V_i$ with exactly one vertex at layer $k$.  Let $\Gamma_0$ be the
induced subgraph of $\Gamma$ on $\cup_{i=0}^k V_i$ and $\Gamma_1$ be
the induced subgraph of $\Gamma$ on $\cup_{i=k}^N V_i$.    Then
$A(\Gamma)$ is Koszul if $A(\Gamma_0)$ and $A(\Gamma_1)$ are Koszul.

\end{theorem}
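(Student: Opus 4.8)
The plan is to establish Koszulity through the distributivity criterion and to exploit the fact that the single vertex at layer $k$ pinches every long descending path through one point. Since $\Gamma$ is uniform, $A(\Gamma)$ is quadratic, so it is Koszul exactly when, for every $n$, the collection $\{RV^{n-2}, VRV^{n-3}, \dots, V^{n-2}R\}$ generates a distributive lattice in $V^n$. Partitioning the generators according to the height of the layer on which they sit places us in the setting of Lemma \ref{countdown}: the defining relations live in the consecutive blocks $V_{i+1}V_i$, so it is enough to check distributivity in the single multigraded component $V^n_{[(n,n-1,\cdots,1)]}$, that is, on the span of the strictly descending paths meeting one vertex of every layer from height $n$ down to height $0$.

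Next I would split this component at the pinch. I write the descending component as $Y \otimes Z$, where $Y$ gathers the tensor factors of height exceeding $k$ and $Z$ gathers those of height at most $k$. If $n \le k$ then every such path lies entirely in $\Gamma_0$ and distributivity is immediate from the Koszulity of $A(\Gamma_0)$, so I assume $n > k$. Because there is exactly one vertex at layer $k$, each descending path factors uniquely through it; consequently $Y$ is precisely the descending component $[(n-k,\dots,1)]$ of $\Gamma_1$ and $Z$ is the descending component $[(k,\dots,1)]$ of $\Gamma_0$. This is exactly the point at which the hypothesis of a unique layer-$k$ vertex enters, and verifying this clean factorization is one of the two delicate steps.

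I then track how the relation subspaces distribute across $Y \otimes Z$. Each block $R_m$ with $m \ge k+1$ involves only heights above the pinch, so its subspace has the form $W_m \otimes Z$; each block $R_m$ with $m \le k-1$ involves only heights at or below the pinch, so its subspace has the form $Y \otimes U_m$. Applying Lemma \ref{countdown} to $A(\Gamma_1)$ and to $A(\Gamma_0)$ separately, the Koszulity hypotheses tell me that $\{W_m\}$ is distributive in $Y$ and $\{U_m\}$ is distributive in $Z$. Lemma \ref{YZ} then shows that the combined family $\{W_m \otimes Z\} \cup \{Y \otimes U_m\}$ is distributive in $Y \otimes Z$.

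The one subspace left unaccounted for is the straddling block coming from $R_k \subset V_{k+1}V_k$, and this is the step I expect to be the main obstacle. The resolution is once more the uniqueness of the layer-$k$ vertex $*$: every edge into height $k$ and every edge out of it meets $*$, so in the presentation of $B(\Gamma)$ one has $u \cdot * = 0$ for every $u$ above $*$, i.e. the whole block $V_{k+1}\otimes V_k$ is a relation; dually, the junction block for $A(\Gamma)$ is zero. In either description the corresponding subspace of $Y \otimes Z$ is either all of $Y \otimes Z$ or the zero subspace, hence a greatest or least element of the lattice, which may be adjoined to a distributive family without destroying distributivity (this is transparent from Proposition \ref{PPP}). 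Thus the full family is distributive in $V^n_{[(n,n-1,\cdots,1)]}$ for every $n$, and Lemma \ref{countdown} upgrades this to distributivity in every $V^n$, yielding Koszulity of $A(\Gamma)$. The two points needing careful justification are the clean tensor factorization $Y \otimes Z$ and the lattice-triviality of the straddling block, both of which rest squarely on there being a single vertex at layer $k$.
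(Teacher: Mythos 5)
Your proposal is correct and takes essentially the same route as the paper's proof: reduce to the strictly descending multidegree component via Lemma \ref{countdown}, split that component at the pinch as $Y \otimes Z$, combine the distributivity supplied by the Koszulity of the two halves using Lemma \ref{YZ}, and observe that the straddling block $V_{k+1}\otimes V_k$ is lattice-trivial because of the unique layer-$k$ vertex (the paper phrases this as $R_{k+1} = \{0\}$ for $B(\Gamma)^!$, which is exactly your dual description). The only cosmetic difference is that the paper announces at the outset that it works in the associated graded algebra $B(\Gamma)^!$ so that the relations lie in consecutive blocks $V_{i+1}V_i$, a point you make only implicitly before invoking the $B(\Gamma)$ presentation.
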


\begin{proof}
We work in terms of the associated graded algebra of $A(\Gamma)$
(equal to $B(\Gamma)^!$) so that the relations meet the hypotheses
of lemma \ref{countdown}.  The lemma allows us to assume the height
decreases at each step.

As $B(\Gamma_1)^!$ is Koszul we know that $$\{(\otimes_{i=j+1}^{N}
V_i) \otimes R_j \otimes (\otimes_{i=k+1}^{j-2} V_i) \}_{j \in \{
k+2, \cdots, N \}}$$ is distributive in $Y =  \otimes_{i=k+1}^{N}
V_i.$ As $B(\Gamma_0)^!$ is Koszul we know that
$$\{(\otimes_{i=j+1}^{k} V_i) \otimes R_j \otimes
(\otimes_{i=1}^{j-2} V_i) \}_{j \in \{ 2, \cdots, k \}}$$ is
distributive in $Z =  \otimes_{i=1}^{k} V_i.$

Applying lemma $\ref{YZ}$ gives us the distributivity of the
collection $$\{(\otimes_{i=j+1}^{N} V_i) \otimes R_j \otimes
(\otimes_{i=1}^{j-2} V_i) \}_{j \in \{ 2, \cdots, k \} \cup \{k+2,
\cdots, N \} }$$ in $\otimes_{i=1}^{N} V_i$.  Since $R_{k+1} = {0}$
this implies distributivity for $j \in \{ 2, \cdots, N \}$ thus
showing $B(\Gamma)^!$ and thus $A(\Gamma)$ is Koszul.

\end{proof}

We can now use this to reduce the number of cases we need to
consider.  Say that a layered graph $\Gamma$ is a \textit{$[z_0,
z_1, \cdots, z_N]$-graph} if $|V_i| = z_i$ for each $i$, where $V_i$
is the collection of vertices of layer $i.$

\begin{proposition} \label{no8}

Any uniform layered graph with unique minimal and maximal elements
and non-Koszul $A(\Gamma)$ has at least nine vertices.

\end{proposition}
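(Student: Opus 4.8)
The plan is to prove the contrapositive: every uniform layered graph $\Gamma$ with unique minimal and maximal vertices and at most eight vertices has $A(\Gamma)$ Koszul. The first move is to reduce to \emph{irreducible} graphs using Theorem \ref{pinch}. Whenever some layer strictly between the bottom and the top consists of a single vertex, I cut $\Gamma$ there; by the contrapositive of Theorem \ref{pinch}, if $A(\Gamma)$ is non-Koszul then at least one of the two induced pieces already has a non-Koszul algebra. Each piece is again a uniform graph with unique minimal and maximal vertices (the two cut layers), has no more vertices than $\Gamma$, and after exhausting all such cuts every layer strictly between its extremes contains at least two vertices. It therefore suffices to prove that every such irreducible graph on at most eight vertices is Koszul.

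Next I would bound the height. Passing to $B(\Gamma)^!$ (the associated graded algebra), the generators are the vertices of layers $1, \dots, N$ and every relation lies in some $V_{i+1}V_i$, so Lemma \ref{countdown} applies and Koszulity is equivalent to distributivity of $\{RV^{n-2}, VRV^{n-3}, \dots, V^{n-2}R\}$ inside the single staircase component $V^n_{[(n,n-1,\dots,1)]}$ for every $n$. For $n \le 3$ this collection has at most two members, and the lattice generated by two subspaces is always distributive; for $n > N$ the staircase component vanishes since there is no vertex at layer $n$. Hence distributivity can fail only for $4 \le n \le N$, so a non-Koszul graph must have height $N \ge 4$. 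For an irreducible graph this forces at least $1 + 2 + 2 + 2 + 1 = 8$ vertices, with equality precisely for the profile $[1,2,2,2,1]$; any height exceeding $4$ already needs at least ten vertices.

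This leaves a single profile to analyze, $[1,2,2,2,1]$, and by the reduction above the only Koszulity condition that is not automatic is the distributivity of the three subspaces $RV^2$, $VRV$, $V^2R$ in the eight-dimensional staircase component $V^4_{[(4,3,2,1)]}$. I would enumerate the uniform edge-configurations of $[1,2,2,2,1]$, using the uniformity condition to force the required up-down connections between the two vertices of each middle layer through the layer below and thereby keep the list of genuinely distinct configurations short. In each configuration I would write the relation spaces explicitly and verify the distributivity identities for three subspaces, equivalently exhibit (via Proposition \ref{PPP}) a common splitting of the staircase component refining all three. The main obstacle is precisely this final step: organizing the enumeration so that uniformity is applied correctly and confirming that no admissible configuration reproduces the non-distributivity that $H$ exhibits. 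Since every layer has at most two vertices, each verification is an explicit finite linear-algebra computation needing no computer, and once all configurations are shown distributive the irreducible non-Koszul graphs all have at least nine vertices, which by the pinch reduction gives the same bound for $\Gamma$ and proves the proposition.
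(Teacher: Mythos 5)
Your proposal is correct, and its skeleton matches the paper's proof: reduce via Theorem \ref{pinch} to graphs whose intermediate layers all contain at least two vertices, observe that a non-Koszul example must then have height at least four and hence at least eight vertices with profile $[1,2,2,2,1]$, and finish with a finite check of the uniform $[1,2,2,2,1]$-graphs (the paper counts ten such graphs with unique extremes, of which five are uniform, and verifies each is Koszul). The one genuine difference is how the height bound is obtained: the paper simply cites \cite{cohom} for the fact that no non-Koszul $A(\Gamma)$ has height three or less, whereas you derive it from machinery already in the paper --- pass to $B(\Gamma)^!$, apply Lemma \ref{countdown}, note that for $n \le 3$ the collection $\{V^iRV^{n-2-i}\}$ has at most two members (and any two subspaces generate a distributive lattice), and that for $n > N$ the staircase component $V_n \otimes \cdots \otimes V_1$ vanishes. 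This makes the argument self-contained, and it also explains why, for a $[1,2,2,2,1]$-graph, Koszulity collapses to the single distributivity check of $RV^2$, $VRV$, $V^2R$ inside the eight-dimensional component $V_4 \otimes V_3 \otimes V_2 \otimes V_1$ --- which is in effect what the paper's ``easy to check by hand'' step amounts to. Both you and the paper leave that final verification as a finite case analysis rather than writing it out; your formulation has the advantage of pinning down exactly what linear-algebra computation each case requires, while the paper's has the advantage of quoting known enumerations (ten graphs, five uniform) so the reader knows the case list is complete.
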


\begin{proof}

Suppose the $[1,z_1,z_2, \cdots, z_{n-1}, 1]$-graph is minimal in
number of vertices amongst graphs with non-Koszul $A(\Gamma)$. Then
$z_i
> 1$ for all $0 < i < N.$ Otherwise we could use theorem \ref{pinch} to
find a smaller example.  We also know from \cite{cohom} that no
example of a non-Koszul $A(\Gamma)$ exists for graphs of height
three or less. This leaves one possibility for a non-Koszul
$A(\Gamma)$ with under nine vertices: that of a [1,2,2,2,1]-graph.

There are only ten such graphs with unique maximal and minimal
vertices \cite{sloane}, and of those only five \cite{unifsloane} are
uniform; see Figure \ref{fig:5unif12221}.
\begin{figure}[h!]
\centering%
\includegraphics[height=30mm]{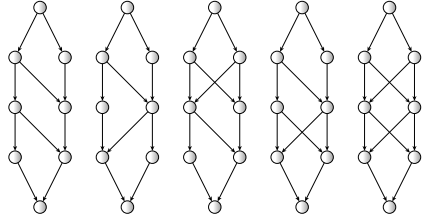}
\caption{The Five Uniform [1,2,2,2,1]-graphs} \label{fig:5unif12221}
\end{figure}
It is easy to check these five cases, by either hand or computer to
see these are all Koszul.

\end{proof}

\begin{theorem}

Consider the collection of all uniform layered graphs with unique
maximal and minimal elements.  $\Gamma = H$ is the minimal example
producing a non-Koszul $A(\Gamma).$

\end{theorem}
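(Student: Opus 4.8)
The plan is to assemble the theorem directly from the two substantive results already in hand. By Theorem \ref{H} the algebra $A(H)$ is not Koszul, and by construction $H$ is a uniform layered graph with unique maximal and minimal vertices on exactly nine vertices. By Proposition \ref{no8}, every uniform layered graph with unique extremal vertices and non-Koszul $A(\Gamma)$ has at least nine vertices. Placing these side by side shows that $H$ attains the minimum possible vertex count for a non-Koszul example, so $H$ is minimal; the nontrivial work has already been absorbed into Theorem \ref{H} and Proposition \ref{no8}.

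To promote minimality to the uniqueness implicit in the statement, I would next isolate every competing nine-vertex candidate. Running the reduction behind Proposition \ref{no8} one more time, any minimal example on nine vertices must still have each interior layer of size at least two, since otherwise Theorem \ref{pinch} would split it at a singleton layer into two strictly smaller graphs, both necessarily Koszul, forcing $A(\Gamma)$ to be Koszul and contradicting minimality. Together with the fact from \cite{cohom} that no non-Koszul $A(\Gamma)$ occurs in height three or less, the counting $1 + (\text{interior sum}) + 1 = 9$ with each interior term at least $2$ pins the height to exactly four and leaves only the three signatures $[1,2,2,3,1]$, $[1,2,3,2,1]$, and $[1,3,2,2,1]$.

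It then remains to enumerate, up to isomorphism, the uniform layered graphs of each of these three signatures having unique maximal and minimal vertices, and to test each for Koszulity by the numerical criterion of Theorem 4.2.1 of \cite{cohom}, exactly as was done for $H$. The expectation is that $H$ is the single graph in this list whose alternating sum of cohomology dimensions fails to vanish, while every other candidate is numerically Koszul. For the other candidates a direct distributivity check is also available through Lemma \ref{countdown}, which reduces the test to the single multidegree $(n,n-1,\cdots,1)$; the reverse-graph duality relating $[1,2,2,3,1]$ to $[1,3,2,2,1]$ can be used to roughly halve the casework.

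The main obstacle is precisely this final enumeration-and-verification step. Uniformity prunes the candidate list sharply—just as it cut the ten $[1,2,2,2,1]$-graphs down to five in Proposition \ref{no8}—but there are still several graphs to organize and dispatch, and care is needed to certify that none of them besides $H$ is non-Koszul. Each individual Koszulity check is conceptually routine, either the Euler-characteristic bound used for $H$ or the distributivity test of Lemma \ref{countdown}, so the real cost is bookkeeping, which is where the program HiLGA is employed; by contrast, the vertex-count minimality and the non-Koszulity of $H$ itself, the two assertions that make the theorem meaningful, require no computer at all.
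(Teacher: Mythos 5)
Your proposal matches the paper's proof in all essentials: invoke Proposition \ref{no8} for the nine-vertex lower bound, observe that the unique-extremal-vertex and uniformity constraints force the signatures $[1,3,2,2,1]$, $[1,2,2,3,1]$, and $[1,2,3,2,1]$, and then dispose of the remaining candidates (the paper counts $10+10+23 = 43$ of them) by computer-assisted enumeration, exactly as you outline. The only caution worth noting is that certifying the other $42$ graphs as Koszul requires more than the numerical (necessary-but-not-sufficient) criterion, so your fallback to the distributivity test of Lemma \ref{countdown} is the part of your plan that should carry that burden.
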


\begin{proof}

To clarify, by minimal we mean in terms of number of vertices though
it has been checked by computer that this is also the edge-minimal
example. Using proposition \ref{no8} we only need to show that $H$
is the unique example with nine vertices producing a non-Koszul
$A(\Gamma)$.

To show this, we need to check the ten uniform [1,3,2,2,1]-graphs,
ten uniform [1,2,2,3,1]-graphs, and twenty-three uniform
[1,2,3,2,1]-graphs (see Figures \ref{fig:10unif13221},
\ref{fig:10unif12231}, and \ref{fig:23unif12321}.)
\begin{figure}[h!]
\centering%
\includegraphics[height=25mm]{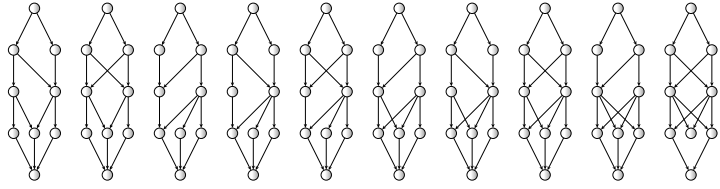}
\caption{The Ten Uniform [1,3,2,2,1]-graphs} \label{fig:10unif13221}
\end{figure}
\begin{figure}[h!]
\centering%
\includegraphics[height=25mm]{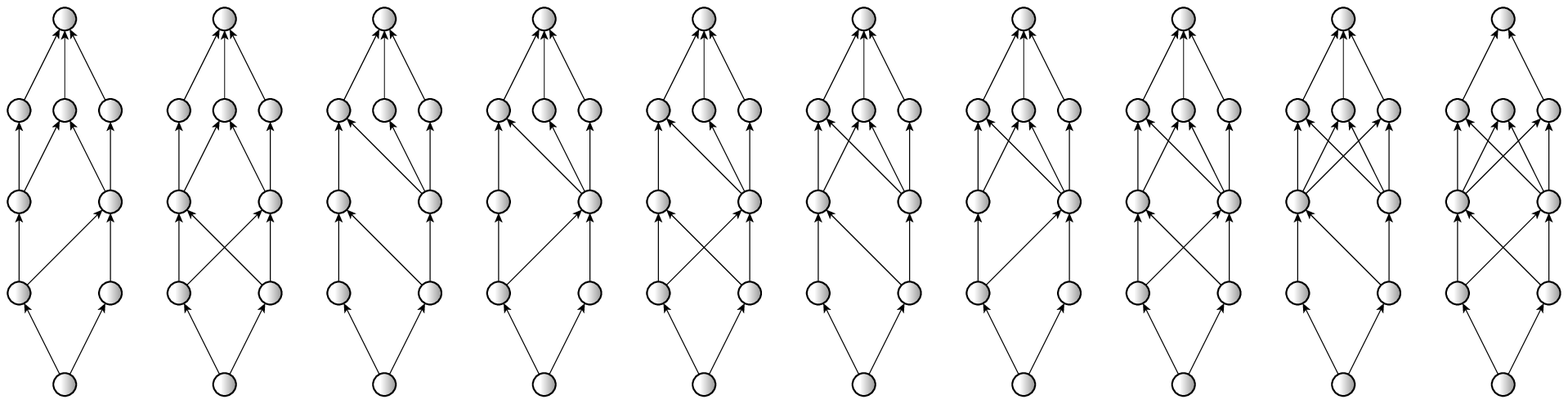}
\caption{The Ten Uniform [1,2,2,3,1]-graphs} \label{fig:10unif12231}
\end{figure}
\begin{figure}[h!]
\centering%
\includegraphics[height=50mm]{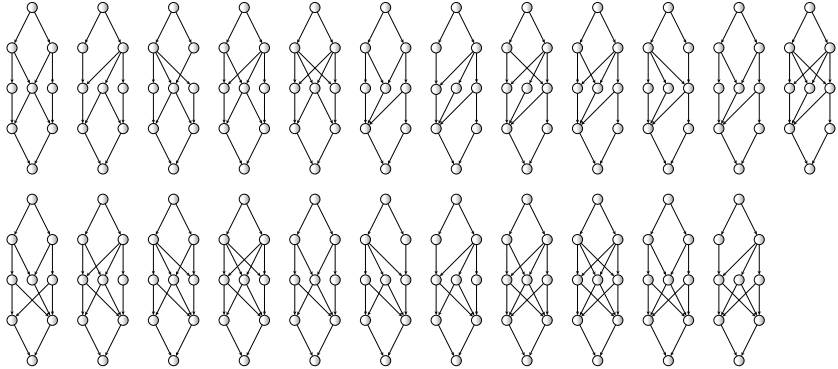}
\caption{The Twenty-three Uniform [1,2,3,2,1]-graphs}
\label{fig:23unif12321}
\end{figure}
This was done by computer, though these 43 cases individually can
still each be done by hand.

\end{proof}

The graph $H$ is minimal under more general conditions as well.  We
can first drop the requirement that the graph must have a unique
maximal element, but still require that maximal elements be of
maximal rank.  This changes very little, but does include the
possibility of a [1,2,2,2,2]-graph with non-Koszul $A(\Gamma)$.
There are thirty-five of these, twenty one of these being uniform,
also not too time consuming for us to check.

To generalize further, we drop the last condition and allow maximal
elements at every level.  Now that we include graphs like the one
shown in Figure \ref{fig:branchy},

\begin{figure}[h!]
\centering%
\includegraphics[height=30mm]{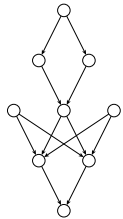}
\caption{An Arbitrary Uniform [1,2,3,2,1]-graph} \label{fig:branchy}
\end{figure}

\noindent we have many more examples to consider.  For ruling out
[1,2,2,2,1]-graphs, there are thirty-three cases needing to be
checked.  To show $H$ is minimal amongst graphs with nine vertices
we must check 83 [1,3,2,2,1]-graphs, 170 [1,2,3,2,1]-graphs, 93
[1,2,2,3,1]-graphs, and 65 [1,2,2,2,2]-graphs.  This is a bit much
to check by hand, but it can and has been done by computer.  With
this, the following has been shown:

\begin{theorem}

Consider the collection of all uniform layered graphs with a unique
minimal element.  $\Gamma = H$ is the minimal example producing a
non-Koszul $A(\Gamma).$

\end{theorem}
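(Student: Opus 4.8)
The plan is to follow the same scheme as the previous two theorems: use the structural reduction results to cut the problem down to a finite—if now considerably larger—family of candidate graphs, and then settle Koszulity case by case. With only a unique minimal element required, the layer-zero condition still forces $z_0 = 1$, but maximal elements (sources) are now permitted at any level rather than only at the top, so the relevant enumeration is precisely the one producing the counts $33, 83, 170, 93, 65$ quoted above.

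First I would pin down the admissible shapes. Since $\Gamma$ has a unique minimal element at layer zero, $z_0 = 1$. By Theorem \ref{pinch}, if any intermediate layer $0 < k < N$ consisted of a single vertex, then $\Gamma$ could be split at that vertex into $\Gamma_0$ and $\Gamma_1$, each with strictly fewer vertices, hence Koszul by minimality, forcing $A(\Gamma)$ Koszul and contradicting the choice of $\Gamma$. Thus a minimal non-Koszul example satisfies $z_i \ge 2$ for every intermediate layer. Combined with the cited fact that no non-Koszul $A(\Gamma)$ exists in height three or less, and with the bound $\sum_i z_i \le 9$, this forces height exactly four: layers $V_0, \dots, V_4$ with $z_0 = 1$, $z_1, z_2, z_3 \ge 2$, and $z_4 \ge 1$ (height five or more would already require at least $1 + 2\cdot 4 + 1 = 10$ vertices). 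The minimal total is $8$, realized only by the shape $[1,2,2,2,1]$, and the only nine-vertex shapes are $[1,3,2,2,1]$, $[1,2,3,2,1]$, $[1,2,2,3,1]$, and $[1,2,2,2,2]$.

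Next I would enumerate, for each admissible shape, all uniform layered graphs with a unique minimal element; this is where dropping the maximal-rank condition enlarges the lists, as a vertex below the top layer may now be a source. For each such graph I would then decide Koszulity. The single non-Koszul graph is $H$, already handled by Theorem \ref{H}; for every other graph I would establish Koszulity directly, by checking that the subspaces $\{V^i R V^{n-i-2}\}$ generate a distributive lattice in $V^n$. By Lemma \ref{countdown} this reduces to checking distributivity in the single strictly-decreasing graded component $V^n_{[(n,n-1,\dots,1)]}$, which is finite-dimensional and, since the generators lie in layers $1$ through $N = 4$, is nontrivial only in bounded degree (the critical case being $n = 4$, exactly as in the $i=4$ computation of Theorem \ref{H}). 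Each verification is therefore a finite linear-algebra computation in a small space.

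The main obstacle here is not conceptual but one of volume and of completeness of the enumeration: one must be certain the lists totalling $33 + 83 + 170 + 93 + 65$ graphs are exhaustive and correctly filtered both for uniformity and for a unique minimal element, and then run the distributivity test on each. Any individual case is small enough to be checked by hand—just as in Theorem \ref{H} and Proposition \ref{no8}—but the sheer number of cases makes a machine verification (our program HiLGA) the practical route, which is exactly what the statement records.
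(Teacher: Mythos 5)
Your proposal matches the paper's argument essentially step for step: reduce via Theorem \ref{pinch} and the height-three fact from \cite{cohom} to the shapes $[1,2,2,2,1]$, $[1,3,2,2,1]$, $[1,2,3,2,1]$, $[1,2,2,3,1]$, $[1,2,2,2,2]$, then dispose of the resulting $33+83+170+93+65$ uniform graphs with unique minimal element by machine check, with $H$ the lone non-Koszul case. Your added observation that Lemma \ref{countdown} makes each individual check a finite distributivity computation (critical case $n=4$) is a correct elaboration of machinery the paper leaves implicit, but it does not change the route.
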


\section{Acknowledgements}

The authors would like to thank Robert Wilson for his guidance and
advice.  We would also like to thank programmer John Yeung for
introducing and mentoring us in Python, the language HiLGA was
written in, which led us to the discovery of the poset $H$.

\bibliographystyle{plain}
\bibliography{citations}

\end{document}